\newtheorem{theorem}{Theorem}[section]
\newtheorem{definition}[theorem]{Definition}
\newtheorem{example}[theorem]{Example}		
\newtheorem{remark}[theorem]{Remark}
\definecolor{mycolor}{RGB}{85,125,250}
\definecolor{gcolor}{RGB}{0.01,0.199,0.1}
\tikzset{declare function = {
                aux(\x)= (\x <= 0) * (0) +
                          and(\x > 0, \x < 1) * ((3*(\x-1)^2-2*(\x-1)^3)/( 3*(\x-1)^2-2*(\x-1)^3 + (3*\x^2-2*\x^3))) +
                          (\x >= 1) * (1)
               ;
               v(\x)= (abs(\x) <= 1/3) * (1) +
                          and(abs(\x) >= 1/3, abs(\x) <= 2/3) * (cos(pi/2)*aux(3*abs(\x)-1))    +
                          (abs(\x) >= 2/3) * (0)
               ;
               left(\x)= (\x >= -1/3) * (0) +
                          and(\x <= -1/3, \x >= -2/3) * (cos(pi/2)*v(3*abs(\x)-1))    +
                          (\x <= -2/3) * (0)
               ;
               right(\x)= (\x <= 1/3) * (0) +
                          and(\x >= 1/3, \x <= 2/3) * (cos(pi/2)*v(3*abs(\x)-1))    +
                          (\x >= 2/3) * (0)
               ;
               mid(\x)= (abs(\x) <= 1/3) * (1) + (abs(\x) >= 1/3) * (0)
               ;
               left_adap(\x)= (\x >= -1/3) * (0) +
                          and(\x <= -1/3, \x >= -2/3) * (cos(pi/2)*v(3*abs(\x)-1))    +
                          (\x <= -2/3) * (0)
               ;
               right_adap(\x)= (\x <= 1/3) * (0) +
                          and(\x >= 1/3, \x <= 2/3) * (cos(pi/2)*v(3*abs(\x)-1))    +
                          (\x >= 2/3) * (0)
               ;
               mid_adap(\x)= (abs(\x) <= pi/18) * (1) + (abs(\x) >= pi/18) * (0)
               ;
               }}
\newcommand{\norm}[1]{\lVert#1\rVert}
\newcommand{\Ao}{\mathbf{A}}
\newcommand{\Fo}{\mathbf{F}}
\newcommand{\Ko}{\mathbf{K}}
\newcommand{\Mo}{\mathbf{M}}
\newcommand{\Ro}{\mathbf{R}}
\newcommand{\Bo}{\mathbf{B}}
\newcommand{\Do}{\mathbf{D}}
\newcommand{\Io}{\mathbf{I}}
\newcommand{\Uo}{\mathbf{U}}
\newcommand{\Vo}{\mathbf{V}}
\newcommand{\Wo}{\mathbf{W}}
\newcommand{\Po}{P}
\newcommand{\Tfun}{\mathcal T}
\newcommand{\pen}{\mathcal P}
\newcommand*\Id{\mathrm{Id}}
\newcommand{\loss}{\mathcal{L}}
\newcommand{\signal}{x}
\newcommand{\data}{y}
\newcommand{\R}{\mathbb{R}}
\newcommand{\dom}{\mathcal{D}}
\newcommand{\nur}{\mathcal{N}}
\newcommand{\ran}{\mathcal{R}}
\newcommand{\al}{\alpha}
\newcommand*\diff{\mathop{}\!\mathrm{d}}
\DeclareMathOperator{\id}{id}
\DeclareMathOperator*{\argmin}{arg\,min}
\newcommand{\X}{\mathbb{X}}
\newcommand{\Y}{\mathbb{Y}}
\newcommand{\M}{\mathbb{M}}
\colorlet{lred}{red!40}
\colorlet{lgreen}{green!40}
\colorlet{lblue}{blue!40}
\definecolor{bananamania}{rgb}{0.98, 0.91, 0.71}
\numberwithin{equation}{section}
\numberwithin{theorem}{section}
\numberwithin{figure}{section}
\definecolor{mathblue}{rgb}{0,0.28,0.55 }
\newcommand{\mpi}{+}
\title{Data-proximal null-space networks for inverse problems}
\date{September 12, 2023}
\author{Simon Göppel}
\affil{Department of Mathematics, University of Innsbruck\authorcr
Technikerstrasse 13, 6020 Innsbruck, Austria\authorcr
E-mail:  \texttt{simon.goeppel@uibk.ac.at}
 }
\author{Jürgen Frikel}
\affil{Department of Computer Science and Mathematics, OTH Regensburg\authorcr Galgenbergstra{\ss}e 32, 93053 Regensburg, Germany\authorcr
E-mail:  \texttt{juergen.frikel@oth-regensburg.de}
 }
\author{Markus Haltmeier}
\affil{Department of Mathematics, University of Innsbruck\authorcr
Technikerstrasse 13, 6020 Innsbruck, Austria\authorcr
E-mail:  \texttt{markus.haltmeier@uibk.ac.at}
 }
\begin{document}

\maketitle

\begin{abstract}

Inverse problems are inherently ill-posed and therefore require regularization techniques to achieve a stable solution.  While traditional variational methods have well-established theoretical foundations, recent advances in machine learning based approaches have shown remarkable practical performance. However, the theoretical foundations of learning-based methods in the context of regularization are still underexplored. In this paper, we propose a general framework that addresses the current gap between learning-based methods and regularization strategies. In particular, our approach emphasizes the crucial role of data consistency in the solution of inverse problems and introduces the concept of data-proximal null-space networks as a key component for their solution. We provide a complete convergence analysis by extending the concept of regularizing null-space networks with data proximity in the visual part. We present numerical results for limited-view computed tomography to illustrate the validity of our framework.

\medskip\noindent \textbf{Keywords:}
Regularization, null-space network, data-proximal  network, convergence analysis, data consistency
\end{abstract}

\section{Introduction}

Inverse problems arise in all kinds of practical applications, such as medical imaging, signal processing, astronomy, computer vision, and more. In this paper, we combine learning-based methods with established regularization concepts for solving inverse problems.  Mathematically, an inverse problem can be expressed as the problem of recovering the unknown $\signal \in \X$ from noisy data
\begin{equation}\label{eq:ip}
	\data^\delta = \Ao \signal + \eta \,,
\end{equation}
where $\Ao \colon \X \to \Y$ is a linear operator between separable Hilbert spaces $\X$ and $\Y$, $\eta \in \Y$ with $\norm{\eta^\delta} \leq \delta$ is the unknown data error, and $\delta \geq 0$ is the known noise bound \cite{engl1996regularization}.

\subsection{Regularization methods}

A major feature of inverse problems is their ill-posedness, so that exact solutions of $\Ao \signal = \data$ are either not unique or unstable with respect to data perturbations. Non-uniqueness, on the other hand, may even cause $\Ao^\mpi \Ao \signal^\star$ to be different from $\signal^\star$. To obtain reliable reconstructions, one must use regularization techniques that adopt a stable approach to solving \eqref{eq:ip} and account for instability and non-uniqueness.
Regularization methods consist of a family of continuous mappings $\Ro_\gamma \colon \Y \to \X$ for $\gamma \in \Gamma$ which, together with a suitable parameter choice strategy $\gamma^\star(\delta, \data^\delta)$, are convergent in the sense specified in Definition~\ref{def:regularization}.  Note that, for simplicity, we work with single-valued regularization methods. For the definition of set-valued regularization methods, see \cite{benning2018modern}.
In classical regularization methods, $\Gamma = (0, \infty)$ is a directed interval for which we denote the regularization parameter by $\al$; see \cite{engl1996regularization}.
A regularization method is said to be linear if $\Ro_\al$ is linear. 

Prominent examples of classical linear regularization methods are Tikhonov regularization and more general spectral filtering methods \cite{engl1996regularization}. A class of non-linear regularization methods is variational regularization, where $ \Ro_\al \data^\delta $ is a minimizer of the generalized Tikhonov functional
\begin{equation}\label{eq:tik}
\Tfun_\al^\delta (\signal) = \frac{1}{2} \, \norm{ \data^\delta - \Ao \signal}^2   + \alpha \pen (\signal) \,.
\end{equation}
Here $\norm{ \data^\delta - \Ao \signal}^2 /2$ is the data fidelity term that enforces data proximity between $\Ao \signal$ and $\data^\delta$, while the functional $\pen$ incorporates prior information about the underlying signal class. The regularization parameter $\alpha >0$ acts as a trade-off between proximity to the data and regularity.
The regularization approach \eqref{eq:tik} offers great flexibility because it is easily tailored to the forward operator, the underlying signal, and the given perturbations. Common selections for $\pen$ include  the TV penalty, Sobolev norms,   or sparsity priors \cite{scherzer2009variational}. Additionally,  variational regularization technique has a solid theoretical foundation.  In particular, under certain weak additional assumptions, one obtains  convergence $ \Ro_\al \data^\delta  \to  \signal^\star $ and data proximity $\Ao \Ro_\al \data^\delta  \to \Ao \signal^\star $ as  $\delta \to 0$.

\subsection{Learned reconstructions}

Major drawbacks of variational regularization are the challenging design of penalties $\pen$ well tuned to the signals of interest, and the time-consuming minimization of \eqref{eq:tik}. To overcome these issues, data-driven methods for solving inverse problems have been developed recently \cite{adler2017,altekrueger2023,arridge2019,jin2017deep,li2020NETT,ongie2020deep,Wang2020}.  In these methods, a class $(\Ro_\theta)_{\theta \in \theta}$ of reconstruction operators $\Ro_\theta \colon \Y \to \X$ is designed  to perform well on a class of training data $(\signal_1, \data_1^\delta), \dots, (\signal_N, \data_N^\delta) \in \X \times \Y$ consisting of pairs of desired reconstructions and noisy data.  The class of reconstruction operators should then be both large enough to include reasonable reconstruction methods, and sufficiently constrained to account for the limited amount of training data and computational resources.

Popular architectures for inverse problems are two-step residual networks,
\begin{equation}\label{eq:res-net}
	\Ro_\theta (\data^\delta) \coloneqq (\Id_\X + \Wo_\theta) \circ \Bo_\al (\data^\delta)\,,
\end{equation}
where $\Bo_\al \colon \Y \to \X $ is an initial classical reconstruction method and $(\Wo_\theta)_{\theta \in \Theta}$ is an image-to-image architecture such as the U-net \cite{ronneberger2015}. Given the initial reconstructions $z_n^\delta = \Bo_\al (\data_n^\delta)$, the network $\Wo_\theta$ is trained independent of the forward operator, by minimizing  the empirical risk  $\loss_N(\theta) =   (1/N) \cdot \sum_{n=1}^N \norm{\signal_n - (\Id_\X + \Wo_\theta) (z_n^\delta)}^2$.
Empirically such approaches have been proven to provide excellent results  \cite{antholzer2019deep,he2016residual,jin2017deep,kang2017deep,lee2017deep,majumdar2015realtime,rivenson2017deep}. However, from a regularization  point of view,  \eqref{eq:res-net} lacks theoretical justification. Even if $\Bo_\al$ together with parameter choice $\al = \al(\delta, \data^\delta)$ is a regularization method,  convergence of neither $\Ro_{\al, \theta} (\data^\delta)$ nor  $\Ao \circ \Ro_{\al, \theta} (\data^\delta)$ is granted as  $\delta \to 0$. In particular, they even lack data consistency in the sense that there is no control over the proximity between  the reconstruction $\Ro_{\al, \theta} (\data^\delta)$ and $\data^\delta$ which limits  applicability in safety-critical applications such as medical imaging.

To enforce  data consistency several approaches integrating the forward operator into the network architecture  have been proposed including variational, iterative networks or network cascades \cite{genzel2023, hammernik2018learning, kofler2018, schlemper2018deep, yiasemis2022recurrent}. However such architectures still do not automatically provide theoretical reconstruction guarantees. A strategy to overcome this issue has been  presented in   \cite{schwab2019deepnullspacelearning,Schwab2020}, where the  use of  so-called null-space networks has been proposed. Null-space networks are a special form of \eqref{eq:res-net} where $\Wo_\theta$ is  restricted to have values only in the kernel  of $\Ao$. It can be taken as   $\Wo = \Po_{\nur(\Ao)}  \Uo_\theta$ where $(\Uo_\theta)_{\theta \in \Theta}$ is any architecture. In \cite{schwab2019deepnullspacelearning} it has been  shown that null-space network  provable  convergent regularization method for \eqref{eq:ip} adapted to the training data.  As a drawback, they only modify the initial reconstruction $\Bo_\al (\data^\delta)$ on the kernel $\ker(\Ao)$ and keep  the part in the complement unchanged. Moreover only linear $\Bo_\al$ have been included in the analysis in \cite{schwab2019deepnullspacelearning}. The regularizing networks  \cite{Schwab2020} relaxes the null-space assumption  but  the design of suitable  architectures  is less obvious.

\subsection{Main contributions}

In this paper we propose and analyze an architecture which allows an update of the component in the complement of the null space, but is limited in the data domain by the noise level.  The general architecture for which we design a rigorous analysis takes the form of the residual network \eqref{eq:res-net} with
\begin{equation}\label{eq:data-prox-net}
	\Wo_{\theta, \beta}
	=  \Po_{\nur(\Ao)} \Uo_\theta + \Ao^\mpi \Phi_\beta \Ao \Vo_\theta \,,
\end{equation}
where $[\Vo_\theta, \Wo_\theta]_{\theta \in \Theta}$ is  is an image-to-image architecture  with two output channels and
$\Phi_\beta$ is a function with $\norm{\Phi_\beta z} \leq \beta$. We will show that  data-proximal networks  \eqref{eq:data-prox-net} together  with \eqref {eq:res-net}   yields a data-proximal  regularization method together with convergence rates. In particular we are show rate-$r$ data proximity which we refer to as  data-error estimates of the form $\norm{\Ao \signal_\al^\delta - \data^\delta}  = \mathcal{O}(\delta^r)$.  Note the architecture \eqref{eq:data-prox-net} in particular uses an explicit decomposition in the null-space $\nur(\Ao)$ and its  complement $\nur(\Ao)^\bot = \ran(\Ao^\mpi)$.

This paper generalizes the regularization results of null-space networks of \cite{schwab2019deepnullspacelearning}  (which are of the form \eqref{eq:data-prox-net}  with $\Vo_\theta =0$) to data-proximal networks. Furthermore, opposed to \cite{schwab2019deepnullspacelearning,Schwab2020} our analysis $(\Bo_\al)_{\al>0}$ does not have to be linear and for example can be of variational type \eqref{eq:tik}.
 The idea to only learn the null-space components has also been used in \cite{mardani2018deep,sonderby2016amortised,Bubba_2019}. In the finite dimensional setting, learning  the null-space   and its  complement   has been proposed in \cite{chen2020deep}.  A regularization approach using approximate null-space networks has been proposed in \cite{Schwab2020} without explicitly splitting into null-space component and  complements.    In contrast, our architecture also allows learning  updates in the orthogonal complement of the kernel of $\Ao$ and has a specific form, which allows  to include data  consistency easily.  

\subsection{Outline}

The remainder of this paper is organized as follows. In Section \ref{sec:theory} we present the theoretical analysis. In particular, we introduce the background and the concept of data-proximal networks, followed by a rigorous convergence analysis. In Section \ref{sec:application} we apply the framework to the limited-view problem in computed tomography. We test the method with FBP and TV regularization as initial reconstruction and compare it with plain null-space learning.  The paper ends with section \ref{sec:conclusion} where we give a short summary and discuss some generalizations and lines of potential future research.

\section{Theory}
\label{sec:theory}

Throughout this paper let $\Ao \colon \X \to \Y$ be a   linear bounded   operator between separable Hilbert spaces  $\X$ and $\Y$. We use $\nur(\Ao)$ and  $\ran(\Ao)$  to the denote the null-space  and  range of $\Ao$, respectively.  The inversion of \eqref{eq:ip} is unstable if $\ran(\Ao)$ is non-closed and  non-unique  if $\nur(\Ao) \neq \{0\}$.  
Our goal is the stable solution of \eqref{eq:ip} in such  situations, by  combining regularization methods with  learned networks.

\subsection{Data-proximal regularization}

In order to solve \eqref{eq:ip} we use regularization methods with general parameter sets including the classical setting  as well as   learned reconstruction as special cases.

\begin{definition}[Regularization method] \label{def:regularization}
Let $\Gamma$ be an index set, $\M \subseteq \X$, $(\Ro_\gamma)_{\gamma \in \Gamma}$  a family of continuous mappings $\Ro_\gamma \colon \Y \to \X$,  and $\gamma^\star \colon (0,\infty) \times \Y \to \Gamma$. The  pair $((\Ro_\theta)_{\theta \in \Theta}, \gamma^\star)$ is said to be a (convergent) regularization method for $\Ao \signal = \data$ over $\M$, if
\begin{equation} \label{eq:reg-limit}
 \forall \signal \in \M \colon \quad
 \lim_{\delta \to 0} \Bigl( \sup \{ \norm{ \signal - \Ro_{\gamma^\star (\delta, \data^\delta)} (\data^\delta)} \mid \data^\delta \in \Y \wedge  \norm{\data^\delta - \Ao\signal} \leq \delta \} \Bigr)  = 0  \,.
\end{equation}
\end{definition}

Classical regularization methods use  $\Gamma = (0, \infty)$ in which case we denote its elements by $\alpha$ and the parameter choice by $\alpha^\star$. In this situation one usually additionally assumes  $\gamma^\star (\delta, \data^\delta) \to 0$ uniformly in $\data^\delta$ as  $\delta \to 0$.  Many  classical methods are further based  on  the pseudoinverse $\Ao^\mpi$ where the set of limiting solutions is given by  $\M = \nur(\Ao)^\bot = \ran(\Ao^\mpi)$; see for example \cite{engl1996regularization}.  However also different limiting solutions are frequently used,  in particular in variational regularization.        

\begin{remark}[Variational regularization]
The  prime example with different limiting solutions is  variational regularization  \eqref{eq:tik}  which together with  $\al, \delta^2/\al \to 0 $ gives a regularization method over the set of $\pen$-minimizing solutions defined by $\argmin_{\signal}  \{ \pen(\signal) \mid  \Ao \signal  = \data \} $ with $\data \in \ran(\Ao)$.  This implicitly requires unique minimizers of \eqref{eq:tik}.  For some regularization methods,  $\Bo_\al$   should be taken set-valued and \eqref{eq:reg-limit}  adjusted accordingly (see \cite{benning2018modern}).  For the sake of simplicity in the presented  theory we restrict to the single-valued case. 
 \end{remark}

 We are in  particular interested in regularization methods $((\Ro_\gamma)_{\gamma \in \Gamma}, \gamma^\star)$ that are rate-$r$ data proximal in the following sense.

\begin{definition}[Data-proximal  regularization method]
Let $r\in (0,1]$. A regularization method $((\Ro_\gamma)_{\gamma \in \Gamma}, \gamma^\star)$ for  $\Ao \signal = \data$ over $\M$  is called rate-$r$ data  proximal,  if  for some $\tau >0$,
\begin{equation} \label{eq:rate-r}
  \forall \signal \in \M \;
  \forall \data^\delta \in \Y \colon \quad
 \norm{\data^\delta - \Ao\signal} \leq \delta
 \Rightarrow
  \norm{ \data^\delta - \Ao \Ro_{\gamma^\star (\delta, \data^\delta)} (\data^\delta)} \leq \tau \delta^r \,.
 \end{equation}
\end{definition}

Data proximity of a regularization method  seems a reasonable condition as the true solution is known to satisfy the data proximity condition $\norm{ \data^\delta - \Ao \signal} \leq  \delta$. Thus any potential reconstruction without data proximity lacks the only information provided by the noisy data $\data^\delta$. Even though this is such an important property  we are  not aware of  an explicit definition in the literature. This may  partially be due to the fact that it is automatically satisfied by  common regularization methods. 
For example rate-$1$ data  proximity  is satisfied by  filter  based  methods under the source condition $\signal^\mpi \in (\Ao^* \Ao)^\mu $ for any $\mu > 0$  as well as for variational regularization under the source  condition  $\partial \pen(\signal^\mpi) \in \ran (\Ao^*)$.
The following example  shows that for filter based  methods rate-$r$ data  proximity   for all $r < 1$ even holds  without a source condition.

\begin{example}[Data proximity without source condition]
\label{def:regularization-DP}
Consider a filter based regularization method $((\Bo_\al)_{\al>0}, \al^\star)$ where $\Bo_\al = g_\al(\Ao^*\Ao) \Ao^*$ for  filter functions $g_\al \colon \R \to \R$ and  $\al^\star, \delta^2/\al^\star \to 0$;  see \cite{engl1996regularization}  for precise definitions. Then
\begin{align*}
\norm{\Ao \Bo_\al  \data^\delta -  \data^\delta}
&=
\norm{(\Ao g_\al(\Ao^*\Ao) \Ao^*  -  \Id_\Y) (\data^\delta)}
\\
&\leq
\norm{(\Ao g_\al(\Ao^*\Ao) \Ao^*  -  \Id_\Y)( \Ao  \signal - \data^\delta) }
+
\norm{(\Ao g_\al(\Ao^*\Ao) \Ao^*  -  \Id_\Y)  \Ao (\signal)}
\\
&\leq
\norm{\Ao g_\al(\Ao^*\Ao) \Ao^*  -  \Id_\Y }   \, \delta
+
\norm{\Ao ( g_\al(\Ao^*\Ao) \Ao^*\Ao  -  \Id_\X)}  \, \norm{\signal}
\\
&\leq
\norm{\Ao g_\al(\Ao^*\Ao) \Ao^*  -  \Id_\Y }   \, \delta
+
\alpha^{1/2}  \, \norm{\signal} \,,
\end{align*}
where the latter inequality used that the filter  has at least qualification  $1/2$. Noting that $\norm{\Ao ( g_\al(\Ao^*\Ao) \Ao^*\Ao  -  \Id_\Y)} $ is bounded for any filter, this shows that for   all $r < 1$ and $R >0$  with $ \al^\star \asymp \delta^{2r}$ and $\norm{\signal} \leq R$ we get $\norm{\Ao \Bo_{\al^\star}  \data^\delta -  \data^\delta} \leq \tau \delta^r$.
\end{example}

Variational regularization approximates  solutions of $\Ao \signal = \data$ with minimal  value of $\pen$. In particular for $\pen = \norm{\cdot}^2/2$ this minimal norm solution is given by the Moore-Penrose  inverse $\Ao^\mpi(\data) \in  \ker(\Ao)^\perp$. The same holds true for other spectral filtering methods. The  concepts of  null-space networks  \cite{schwab2019deepnullspacelearning}  addresses potentially suboptimal solution selection by approximating elements in a general set parameterized by  $\ran(\Ao^\mpi)$.

\begin{example}[Null-space networks]
The  regularizing null-space networks analyzed in  \cite{schwab2019deepnullspacelearning}  take the form
\begin{equation}\label{eq:null-space}
\forall \al>0  \colon \qquad
\Ro_\al \coloneqq  (\Id_\X + \Po_{\nur(\Ao)}  \Uo )\circ \Bo_\al \,,
\end{equation}
where   $((\Bo_\al)_{\al>0},\al^\star)$ is a regularization method with admissible set $ \ran(\Ao^\mpi)$, and $\Uo$ a Lipschitz function.  The function $\Uo$ can be selected from any network architecture $(\Uo_\theta)_{\theta \in \Theta}$ based on training data $\signal_1, \dots, \signal_N$. Except  of being Lipschitz, no other assumptions are required from a theoretical point of view. In  \cite{schwab2019deepnullspacelearning} it is shown that  $((\Ro_\al)_{\al>0},\al^\star)$ is a regularization method with $ \M \coloneqq  ( \Id_\X + \Po_{\nur(\Ao)}  \Uo  )    (\ran(\Ao^\mpi) )$.
Further, $(\Id_\X + \Po_{\nur(\Ao)})  \Uo $ preserves data proximity of $\Bo_\al$ in the sense that   $\norm{\Ao \Bo_\al (\data^\delta) - \data^\delta} \leq \tau \delta^r \Rightarrow  \norm{\Ao \Ro_\al (\data^\delta) - \data^\delta} \leq \tau \delta^r$.
\end{example}

Data  consistency of null-space networks comes at the cost that the component of $\Bo_\al (\data^\delta) $ in the range $\ran(\Ao^\mpi)$ remains unchanged by $( \Id_\X + \Po_{\nur(\Ao)}  \Uo )$.  Allowing a network to  also act  in $\ran(\Ao^\mpi)$  is however beneficial, if the forward operator $\Ao$ contains many small  singular values. How to obtain data consisten regularizations for networks that also  act  in $\ran(\Ao^\mpi)$  are studied in the present paper.

\subsection{Data-proximal networks}

Throughout  this section assume that we have given a data-proximal, potentially non-linear, regularization  method $((\Bo_\al)_{\al >0}, \al^\star)$.

\begin{definition}[Data-proximal null-space networks]\label{def:data-prox}
Let $[\Uo_\theta, \Vo_\theta]_{\theta \in \Theta}$ be  a family of Lipschitz mappings $\Uo_\theta, \Vo_\theta  \colon \X \to \X$ and $(\Phi_\beta)_{\beta >0}$ a family of mappings $\Phi_\beta \colon \ran (\Ao) \to \ran(\Ao)$ such that $ \forall \beta >0$ $ \forall z \in  \ran(\Ao) \colon  \norm{\Phi_\beta \data}_2 \leq \beta$. We call the family of mappings
\begin{equation} \label{eq:data-prox}
	\Do_{\theta, \beta}  \coloneqq
	\Id_\X + \Po_{\nur(\Ao)}  \circ \Uo_\theta  + \Ao^\mpi \circ \Phi_\beta \circ \Ao \circ \Vo_\theta
\end{equation}
data-proximal null-space network defined by $\Uo_\theta, \Vo_\theta, \Phi_\beta, \Ao$.
\end{definition}

Note that for the special case $\Vo_\theta =0$  we obtain  a null-space network   $\Id_\X + \Po_{\nur(\Ao)}  \circ \Uo_\theta$. The latter obeys strict data consistency  in the sense that $\Ao \circ ( \Id_\X + \Po_{\nur(\Ao)}  \circ \Uo_\theta) = \Ao$.   Data-proximal networks relax the strict data consistency  to the data proximity condition
\begin{equation*}
\norm{\Ao \circ  (\Id_\X + \Po_{\nur(\Ao)}  \circ \Uo_\theta + \Ao^\mpi \circ \Phi_\beta \circ  \Ao \circ  \Uo_\theta)  (\signal) - \Ao  \signal }_2  = \norm{( \Phi_\beta \circ  \Ao \circ \Vo_\theta)(  \signal )}
  \leq  \beta \,.
\end{equation*}
In particular,  if  $\norm{\Ao \signal- \data^\delta} \leq \delta$, then  $\norm{\Ao \Do_{\theta, \beta}  \signal- \data^\delta} \leq \delta + \beta$ independent of the selected $\theta$. Any reconstruction method without such an estimate seems  unreasonable as $\norm{\Ao \signal- \data^\delta} \leq  \delta$ is the  information provided by the noise data  and therefore  should be respected.

\begin{remark}[Special cases]
The proposed architecture  includes  many   image reconstruction methods as special case:
\begin{itemize}[topsep=0em, itemsep=0em]
\item  \emph{Classical regularization:} With $\Uo_\theta = \Vo_\theta=0$ we have classical regularization $\Ro_\al   = \Bo_\al$.   For  example, in convex variational regularization elements $\Bo_\al$ converge to $\pen$-minimizing solutions \cite{scherzer2009variational}. No  trained  network can  be included (expect of course learning the regularizer and the regularization parameter).     

\item
\emph{Standard residual networks:}
With $\Uo_\theta = \Vo_\theta$ and $\Phi_\beta = \Id_\Y$ (thus formally using $\beta = \infty$) and  $\Bo_\al = \Ao^\mpi$ we obtain the residual network $\Ro_\theta  =  (\Id_\X + \Uo_\theta )   \circ \Ao^\mpi$ of   \cite{jin2017deep,lee2017deep}. However  it lacks  data consistency for  which  we need  $\beta < \infty$.

\item \emph{Regularized null-space networks:}  With $\Vo_\theta=0$, $\Uo_\theta = \Uo$  and  $\Bo_\al$ a regularization for $\Ao^\mpi$ we obtain the regularized null-space networks $\Ro_\al    = (\Id_\X + \Po_{\nur(\Ao)}  \Uo )   \circ \Bo_\al$ of  \cite{schwab2019deepnullspacelearning}. This network architecture does not allow to learn  anything orthogonal to the null-space. 

\item
\emph{Range-nullspace decomposition:}
With  $\Phi_\beta = \id$ and  $\Bo_\al = \Ao^\mpi$ we range-nullspace decomposition  $\Ro_\theta  =  (\Id_\X + \Po_{\nur(\Ao)}\Uo_\theta + \Po_{\ran(\Ao^\mpi)}  \Vo_\theta)   \circ \Ao^\mpi$ considered  in  \cite{chen2020deep}. This  architecture does not include data consistency and moreover $\Ao^\mpi$ might be unstable.  

\item
\emph{Regularizing networks:}  With $\Uo_\theta = \Uo_\theta$,   proper choice $ \theta = \theta(\al)$ taking  $\Bo_\al$ as filter based  regularization  and  under certain  convergence properties, we get  the  regularizing  networks  $\Ro_\al  =  (\Id_\X + \Uo_{\theta(\al)} )   \circ \Bo_\al$ of \cite{Schwab2020}. This  architecture does not  explicitly include data consistency.
\end{itemize}
Thus  our architecture might be seen as generalization  of ones of \cite{Schwab2020,chen2020deep}. Extending  \cite{chen2020deep}  we allow $\Ao^\mpi$ to be replaced  by a regularization $\Bo_\al$,   include the data proximity function in the  range  network  $\Po_{\ran(\Ao^\mpi)}  \Vo_\theta$ and  allow the ill-posed case. We extend \cite{Schwab2020} by treating the range and null-space components separately, include the data proximity function  in the  range  network and treat $\theta$ as independent parameter in the architecture.
\end{remark}

Extending the concept of regularizing null-space networks, our  aim is  to  show that $\Ro_{\al, \beta, \theta} = \Do_{\beta, \theta } \circ \Bo_\al$  yields a  convergent data consistent  regularization method in the sense of Definitions~\ref{def:regularization} and \ref{def:regularization-DP} with  parameter selections $\al^\star, \beta^\star, \theta^\star$. Our strategy assuring  this is simple. Starting with an rate-$r$  data-proximal  regularization method $((\Bo_\al)_{\al>0}, \al^\star)$ we select  $\theta^\star$ and $\beta^\star$ such that convergence  is preserved, however to an element different to  $\signal^\star$ selected by a limiting null-space networks. The network $\Ao^\mpi \circ \Phi_\beta \circ \Ao \circ \Vo_\theta $  is especially relevant in the noisy case in order to  obtain  improved denoising properties on specific sets and $\Phi_\beta$  is used  to preserve data-proximity.
Note that the parameter $\beta$ in data-proximal null-space network directly allows to control the data proximity between  any $\signal$ and  $\Do_{\theta, \beta} (\signal)$. Opposed to $\theta$ It is not intended to be subject to the training process.

\subsection{Convergence analysis}

Throughout  this section let  $((\Bo_\al)_{\al >0}, \al^\star)$  be a regularization method  over $\M$ and  $\Do_{\theta, \beta}$ be a data-proximal null-space network defined by  $\Uo_\theta, \Vo_\theta, \Phi_\beta, \Ao$. The goal is to show that  $\Do_{\theta, \beta} \circ \Bo_\al$  gives  a convergent  (data-proximal)  regularization method with rates.

\begin{theorem}[Convergence]\label{thm:convergence}
Suppose there a Lipschitz function $ \Uo \colon \X \to \X$ and $\beta^\star = \beta^\star(\delta, \data^\delta) $, $\theta^\star = \theta^\star(\delta, \data^\delta) $ such that
$(\Do_{\theta, \beta})_{\theta, \beta}$ are uniformly Lipschitz on bounded sets
and
\begin{equation}\label{eq:conv-cond}
\forall  z \in \M \colon 
\Do_{\theta^\star, \beta^\star} (z) \to (\Id_\X + \Po_{\nur(\Ao)}  \Uo ) (z) \quad \text{ as } \delta \to 0 \,.
\end{equation}
Then with $(\Ro_\gamma)_\gamma \coloneqq (\Do_{\theta, \beta} \circ \Bo_\al)_{\al, \beta, \theta}$ and  $\gamma^\star \coloneqq (\al^\star, \beta^\star, \theta^\star)$, the   following hold:
\begin{enumerate}
\item $( (\Ro_\gamma)_{\gamma}, \gamma^\star)$ is a convergent regularization method  on  $(\Id_\X + \Po_{\nur(\Ao)}  \Uo )  (\M)$.

\item
If $\beta^\star = \mathcal{O} (\delta^r)$ then $( (\Ro_\gamma)_{\gamma}, \gamma^\star)$ is $r$-rate data proximal, provided   $((\Bo_\al)_\al, \al^\star)$ is.
\end{enumerate}
\end{theorem}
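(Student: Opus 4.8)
The plan is to verify the two defining properties directly, exploiting the algebraic identity that $\Ao$ annihilates the null-space correction. The key preliminary observation is that $\Po_{\nur(\Ao)}\Uo(z) \in \nur(\Ao)$, while $\Phi_\beta$ takes values in $\ran(\Ao)$, on which $\Ao\Ao^\mpi$ acts as the identity; hence for every $x \in \X$,
\[
\Ao\Do_{\theta,\beta}(x) = \Ao x + \Phi_\beta \Ao\Vo_\theta(x),
\]
and in particular $\Ao(\Id_\X + \Po_{\nur(\Ao)}\Uo)(z) = \Ao z$. Throughout I would fix $z \in \M$, write $w = (\Id_\X + \Po_{\nur(\Ao)}\Uo)(z)$ for the corresponding limiting solution, and abbreviate $y_\delta = \Bo_{\al^\star(\delta, \data^\delta)}(\data^\delta)$. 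Since $\Ao w = \Ao z$, the admissibility condition $\norm{\data^\delta - \Ao w} \le \delta$ coincides with $\norm{\data^\delta - \Ao z} \le \delta$, so the data admissible for $w$ are exactly those admissible for $z \in \M$; this is what lets me invoke the hypotheses on $((\Bo_\al)_\al, \al^\star)$.

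For item~(1), I would bound $\norm{w - \Ro_{\gamma^\star}(\data^\delta)} = \norm{w - \Do_{\theta^\star,\beta^\star}(y_\delta)}$ by the triangle inequality via
\[
\norm{w - \Do_{\theta^\star,\beta^\star}(z)} + \norm{\Do_{\theta^\star,\beta^\star}(z) - \Do_{\theta^\star,\beta^\star}(y_\delta)}.
\]
The first summand tends to $0$ (uniformly in admissible $\data^\delta$) by the convergence hypothesis \eqref{eq:conv-cond}. For the second, the regularization property of $((\Bo_\al)_\al, \al^\star)$ over $\M$ gives $\sup \norm{z - y_\delta} \to 0$, so for small $\delta$ all $y_\delta$ together with $z$ lie in a fixed ball $B$; the uniform-Lipschitz bound on $B$ then yields a constant $L$ with $\norm{\Do_{\theta^\star,\beta^\star}(z) - \Do_{\theta^\star,\beta^\star}(y_\delta)} \le L\norm{z - y_\delta}$, which vanishes. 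Taking the supremum over admissible $\data^\delta$ and then letting $\delta \to 0$ establishes \eqref{eq:reg-limit}; continuity of each $\Ro_\gamma$ is inherited from continuity of $\Bo_\al$ and the local Lipschitz continuity of $\Do_{\theta,\beta}$.

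For item~(2), I would use the identity above to observe that, for $\data^\delta$ admissible for $w$, one has $\Ao\Ro_{\gamma^\star}(\data^\delta) = \Ao y_\delta + \Phi_{\beta^\star}\Ao\Vo_{\theta^\star}(y_\delta)$, whence
\[
\norm{\data^\delta - \Ao\Ro_{\gamma^\star}(\data^\delta)} \le \norm{\data^\delta - \Ao\Bo_{\al^\star}(\data^\delta)} + \norm{\Phi_{\beta^\star}\Ao\Vo_{\theta^\star}(y_\delta)}.
\]
The first term is $\le \tau_0\delta^r$ because $((\Bo_\al)_\al, \al^\star)$ is rate-$r$ data proximal and $\data^\delta$ is admissible for $z \in \M$; the second is $\le \beta^\star$ by the defining property $\norm{\Phi_\beta(\cdot)} \le \beta$ (applicable since $\Ao\Vo_{\theta^\star}(y_\delta) \in \ran(\Ao)$), and $\beta^\star = \mathcal{O}(\delta^r)$ gives $\beta^\star \le C\delta^r$. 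Summing yields the claim with $\tau = \tau_0 + C$.

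I expect the main obstacle to be the uniformity bookkeeping in item~(1) rather than any deep estimate. One must read \eqref{eq:conv-cond} as convergence that is uniform over all admissible $\data^\delta$ (recall that $\theta^\star, \beta^\star$ themselves depend on $\data^\delta$), and one must ensure that the ball $B$, and hence the Lipschitz constant $L$, can be chosen independently of $\data^\delta$ — which is precisely what ``uniformly Lipschitz on bounded sets'' provides, once the uniform convergence $y_\delta \to z$ has placed all iterates in a common bounded set.
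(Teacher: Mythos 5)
Your proposal is correct and follows essentially the same route as the paper's proof: the same triangle-inequality split of $\norm{w-\Do_{\theta^\star,\beta^\star}(y_\delta)}$ into an approximation term controlled by \eqref{eq:conv-cond} and a Lipschitz term controlled by the convergence of $((\Bo_\al)_\al,\al^\star)$, and the same identity $\Ao\Do_{\theta,\beta}=\Ao+\Phi_\beta\Ao\Vo_\theta$ giving the extra $\beta^\star=\mathcal{O}(\delta^r)$ contribution for data proximity. Your version is in fact slightly more careful than the paper's (noting $\Ao w=\Ao z$ so that admissibility transfers, and spelling out the uniformity bookkeeping for the Lipschitz constant), but these are refinements of the same argument rather than a different approach.
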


\begin{proof}
Let $\signal^* =  (\Id_\X + \Po_{\nur(\Ao)}  \Uo )(z^\star)$ with $z^\star \in \M$. Then
\begin{align*}
\norm{\signal^\star - \Ro_{\gamma} (\data^\delta)}
& = \norm{(\Id_\X + \Po_{\nur(\Ao)}  \Uo )(z^\star) -  \Do_{\beta, \theta}  (\Bo_\al \data^\delta)} \\
& \leq
 \norm{ \Do_{\beta, \theta}  (\Bo_\al (\data^\delta) - z^\star)}
 +  \norm{ (\Id_\X + \Po_{\nur(\Ao)}  \Uo ) (z^\star) -  \Do_{\beta, \theta}  ( z^\star)}
 \\& \leq
L  \norm{\Bo_\al (\data^\delta) - z^\star)}
 +  \norm{ (\Id_\X + \Po_{\nur(\Ao)}  \Uo ) (z^\star) -  \Do_{\beta, \theta}  ( z^\star)}
 \end{align*}
With the convergence  of $((\Bo_\al)_{\al >0}, \al^\star)$ and \eqref{eq:conv-cond} this shows the convergence  of    $( (\Ro_\gamma)_{\gamma}, \gamma^\star)$. Now let $((\Bo_\al)_\al, \al^\star)$ be  $r$-rate data proximal and  $\beta^\star = \mathcal{O} (\delta^r)$. By the definition of
$\Ro_{\gamma}$ we have $
\norm{\data^\delta - \Ao \Ro_{\gamma} (\data^\delta)}
 \leq \norm{\data^\delta - \Ao (\Bo_{\alpha} (\data^\delta)}
+  \beta$ which gives the  $r$-rate data proximity of  $((\Ro_\gamma)_\gamma, \gamma^\star)$ be
 \end{proof}

Under additional assumptions we also obtain convergence rates.

\begin{theorem}[Convergence Rates]
In the situation of Theorem \ref{thm:convergence}, let $((\Bo_\al)_{\al>0}, \al^\star)$ be  rate-$r$ data consistent regularization method over $\M_s \subseteq \M$ that is  convergent of rate $s$.
Then under the approximation  assumption $\norm{ (\Id_\X + \Po_{\nur(\Ao)}  \Uo ) (z^\star) -  \Do_{\beta, \theta}  ( z^\star)} = \mathcal{O} (\delta^r) $ on $\M_s$ we have, for  $\beta^\star = \mathcal{O} (\delta^r)$ and  with $\signal^\star \in ((\Id_\X + \Po_{\nur(\Ao)}  \Uo
) (\M_s)$ with  $\norm{\data^\delta - \Ao \signal^\star} \leq  \delta$
\begin{align} \label{eq:rate1}
\norm{\signal^\star - \Bo_{\gamma} (\data^\delta)}  &= \mathcal{O} (\delta^s) \quad \text{as $\delta \to 0$} \,,\\
\label{eq:rate2}
\norm{\data^\delta - \Ao \Ro_{\gamma} (\data^\delta)}  &= \mathcal{O} (\delta^r) \quad \text{as $\delta \to 0$} \,.
\end{align}
That is, the regularization method  $( (\Ro_\gamma)_{\gamma}, \gamma^\star)$ is  rate-$r$ data proximal  and  rate-$s$ convergent on $(\Id_\X + \Po_{\nur(\Ao)}  \Uo )  (\M_s)$.
\end{theorem}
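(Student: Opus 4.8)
The plan is to make the proof of Theorem~\ref{thm:convergence} quantitative, treating the reconstruction error \eqref{eq:rate1} and the data-proximity error \eqref{eq:rate2} separately, since they are governed by independent mechanisms: the former by the convergence rate of $\Bo_\al$ together with the network-approximation assumption, the latter by the data-proximity rate of $\Bo_\al$ together with the bound on $\Phi_\beta$.

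First I would establish \eqref{eq:rate1}. Writing $\signal^\star = (\Id_\X + \Po_{\nur(\Ao)} \Uo)(z^\star)$ with $z^\star \in \M_s$, I reuse the splitting from the proof of Theorem~\ref{thm:convergence},
\begin{equation*}
\norm{\signal^\star - \Ro_\gamma(\data^\delta)}
\leq L\, \norm{\Bo_{\al^\star}(\data^\delta) - z^\star}
+ \norm{(\Id_\X + \Po_{\nur(\Ao)} \Uo)(z^\star) - \Do_{\theta^\star,\beta^\star}(z^\star)},
\end{equation*}
where $L$ is the uniform Lipschitz constant of $(\Do_{\theta,\beta})_{\theta,\beta}$ on the relevant bounded set. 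A short preliminary observation makes both inputs controllable: since $\Ao \Po_{\nur(\Ao)} = 0$ we have $\Ao \signal^\star = \Ao z^\star$, so the hypothesis $\norm{\data^\delta - \Ao\signal^\star} \leq \delta$ is equivalent to $\norm{\data^\delta - \Ao z^\star} \leq \delta$, i.e. $z^\star$ satisfies precisely the data-proximity premise under which the guarantees for $\Bo_\al$ apply. The rate-$s$ convergence of $((\Bo_\al)_{\al>0}, \al^\star)$ over $\M_s$ then bounds the first term by $\mathcal{O}(\delta^s)$, while the approximation assumption bounds the second by $\mathcal{O}(\delta^r)$; adding the two yields the reconstruction rate.

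For \eqref{eq:rate2} I would first compute the action of $\Ao$ on the network. Using $\Ao \Po_{\nur(\Ao)} = 0$ together with $\Ao \Ao^\mpi \Phi_\beta = \Phi_\beta$ (valid because $\Phi_\beta$ takes values in $\ran(\Ao)$ and $\Ao\Ao^\mpi$ is the orthogonal projection onto $\overline{\ran(\Ao)}$), one gets, with $w \coloneqq \Bo_{\al^\star}(\data^\delta)$,
\begin{equation*}
\Ao \Ro_\gamma(\data^\delta) = \Ao \Do_{\theta^\star,\beta^\star}(w) = \Ao w + \Phi_{\beta^\star}(\Ao \Vo_{\theta^\star}(w)).
\end{equation*}
A triangle inequality then gives $\norm{\data^\delta - \Ao\Ro_\gamma(\data^\delta)} \leq \norm{\data^\delta - \Ao\Bo_{\al^\star}(\data^\delta)} + \norm{\Phi_{\beta^\star}(\Ao\Vo_{\theta^\star}(w))} \leq \tau\delta^r + \beta^\star$, where the first summand uses the rate-$r$ data proximity of $\Bo_\al$ at $z^\star \in \M_s$ (legitimate by the equivalence $\norm{\data^\delta - \Ao z^\star} \leq \delta$ noted above) and the second uses the defining bound $\norm{\Phi_\beta z} \leq \beta$. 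With $\beta^\star = \mathcal{O}(\delta^r)$ this is $\mathcal{O}(\delta^r)$, proving \eqref{eq:rate2}.

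The routine parts are the triangle inequalities and invoking the already-established rates for $\Bo_\al$. The point requiring care is the bookkeeping of the two reconstruction contributions: the regularization error enters at rate $s$ and the network-approximation error at rate $r$, so the claimed reconstruction rate $\delta^s$ relies on the approximation contribution not being slower, i.e. on $\mathcal{O}(\delta^r) \subseteq \mathcal{O}(\delta^s)$, which holds precisely when $r \geq s$; I would make this relationship explicit (or equivalently record the combined bound as $\mathcal{O}(\delta^{\min(r,s)})$). The other detail to pin down is that the arguments $\Bo_{\al^\star}(\data^\delta)$ and $z^\star$ stay in a fixed bounded set, so that a single Lipschitz constant $L$ applies uniformly as $\delta \to 0$; this follows from the convergence of $\Bo_\al$.
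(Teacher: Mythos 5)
Your proof follows essentially the same route as the paper: both reuse the error splitting from the proof of Theorem~\ref{thm:convergence}, bounding the first term by the rate-$s$ convergence of $\Bo_\al$ and the second by the approximation assumption, and both obtain the data-proximity estimate \eqref{eq:rate2} from the triangle inequality together with the bound $\norm{\Phi_{\beta^\star}(z)} \le \beta^\star = \mathcal{O}(\delta^r)$. Your added observations --- that the combined reconstruction bound is really $\mathcal{O}(\delta^{\min(r,s)})$ unless $r \ge s$, and that $\Ao \signal^\star = \Ao z^\star$ is what licenses applying the guarantees for $\Bo_\al$ at $z^\star$ --- are correct refinements of details the paper leaves implicit.
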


\begin{proof}
Condition \eqref{eq:rate2} follows from Theorem  \ref{thm:convergence}. Moreover  according to the proof of the theorem we have    $ \norm{\signal^\star - \Ro_{\gamma} (\data^\delta)}
 \leq
L  \norm{\Bo_\al (\data^\delta) - z^\star)}
 +  \norm{ (\Id_\X + \Po_{\nur(\Ao)}  \Uo ) (z^\star) -  \Do_{\beta, \theta}  ( z^\star)}$. This gives the claim  by the parameter choice and the made approximation assumption.
 \end{proof}

\section{Application}
\label{sec:application}
In this section we present a numerical example for our proposed data-proximal regularization approach. We consider limited angle computed tomography (CT) modeled by the Radon transform as forward problem.

\subsection{The Radon transform}

The Radon transform of a compactly supported smooth function $u \colon \R^2 \to \R$ is defined  by
$ \Ko \signal (\theta, s) \coloneqq \int_{L(\theta, s)} u(x)  \diff L(x)$ for $(\theta, s) \in [-\pi/2, \pi/2) \times \R  $. Here $L(\theta, s) \coloneqq \{(x_1, x_2)\in \R^2 \mid x_1 \cos (\theta) + x_2 \sin (\theta) = s\}$ denotes the line in $\R^2$ with singed distance  $s\in\R$ from the origin and direction   $(\cos (\theta), \sin(\theta))^T$ with $\theta \in [-\pi/2, \pi/2)$. In limited angle CT, the data is only known within a limited subset  $\Omega \subseteq [-\pi/2,\pi/2)$ of the full angular range. The limited angle Radon transform is then defined as
\begin{align*}
\Ko_\Omega \colon \dom(\Ko_\Omega ) \subseteq L^2(\R^2) \to L^2(S^1\times \R) \colon
u \mapsto  \chi_{\Omega \times \R}  \Ko u.
\end{align*}
 The well  known  filtered back-projection (FBP) inversion formula for the full data  Radon transform  reads  $u =  \Ko^* \Io (\Ko u) $, where $\Io$ is the so-called Riesz-potential and defined in the Fourier domain by $\Fo_2 (\Io u)  \coloneqq \norm{\cdot} (\Fo_2 u)  / (4\pi)$ where $\Fo_2$ is the Fourier transform in the second component; see \cite{natterer2001mathematics}.
 The application of the FBP formula to limited angular data is known to cause prominent streak artifacts which can obscure important information \cite{Quinto93,quinto2017artifacts}. While these artifacts have been characterized by methods from microlocal analysis \cite{frikel2013characterization,frikelquinto2016,Borg2018}, finding suitable reconstruction strategies is still an ongoing challenge. Thus, we will employ our proposed data-proximal null-space network to obtain a reliable and data-proximal reconstruction.

In our simulations we use synthetic Shepp-Logan type phantoms supported within the ball of radius one where $u$ is represented  by  discrete image $\signal \in \R^{N \times N} $ with $N = 128$. To obtain a discretized versions for the forward operator, we evaluate the limited angle Radon transform at $N_s = 128$ equidistant distances in $[-1, 1]$ and  $N_\Omega = 120$ equidistant angles in $[-\pi/3, \pi/3)$.  More details on the implementation of the discretized version of the Radon transform which we used in our experiments can be found in the repository \url{https://github.com/drgHannah/Radon-Transformation}.   The discretized  limited angle  Radon transform and  FBP formula are  denoted  by $\Ao$ and  $\Ao^\sharp$, respectively.

\subsection{Network design and training}

Throughout all numerical calculations, the  networks  architectures for $\Uo_\theta$ and $\Vo_\theta$ are taken  as the basic U-net \cite{ronneberger2015}, which is still considered a state-of-the-art model due to its ability to reliably learn image features. Based on the U-net we then consider the architectures 
\begin{align*}
\Mo_\theta^{(1)} &= \id_\X + \Uo_\theta  \,, \\
\Mo_\theta^{(2)} &= \id_\X + \Po_{\nur(\Ao)} \Uo_\theta  \,, \\
\Mo_\theta^{(3)} &= \id_\X + \Po_{\nur(\Ao)} \Uo_\theta + \Ao^\sharp \Phi_\beta  \Ao \Vo_\theta  \,, 
\end{align*}
where $\Mo_\theta^{(1)}$ is the plain residual U-net, $\Mo_\theta^{(2)}$ the null-space architecture and $\Mo_\theta^{(3)}$ the prosed data-proximal null-space network.  The data-proximal network uses shared weights and divides the output in  two streams via projections onto the kernel and the orthogonal complement, respectively.
 
 The data-proximity function is taken as radial function 
\begin{equation}\label{eq:DPexample}
\Phi_\beta (\signal) \coloneqq
\begin{cases}
\signal, & \norm{\signal}_2 \leq  \beta, \\
\beta \cdot \signal / \norm{\signal}_2,  & \text{else} \,.
\end{cases}
\end{equation}
In the numerical simulations we use $\beta  \coloneqq  \delta \sum_{i=1}^N \norm{\eta_i}_2/N$.
This way, we obtain an estimate of the magnitude of the perturbations present in the data domain.

As initial reconstruction method we use the FBP operator as well as total-variation (TV) regularization, which is known to be a good prior for the missing data setup \cite{wang2017,velikina2007,sidky2008,Persson2001}. The family $(\Bo_\al)_{\al>0}$ is then given by $\Bo_\al \data^\delta \coloneqq \argmin_{\signal} \norm{\Ao \signal - \data^\delta}_2^2/2 + \al \norm{\nabla \signal}_1$ and numerically solved with the Chambolle-Pock algorithm \cite{Chambolle2011}.

For training the networks we generate data pairs $(\signal_i, \data_i^\delta)_{i=1}^{600}$ with $\data_i^\delta =\Ao \signal_i + \delta \eta_i$ where $\delta = 0.05$ and $\eta_i \sim \norm{\Ao \signal_i}_\infty \cdot \mathcal{N}(0,1)$. All networks $\Mo^{(i)}_\theta$ are trained by minimizing \begin{equation}\label{eq:loss}
\loss_N (\theta) \coloneqq \frac{1}{N} \sum_{i=1}^N \norm{ \Mo_\theta  \Bo_\theta (\data_i^\delta) - \signal_i}_2^2
\end{equation}
by the Adam optimizer with learning rate of $0.001$.
We trained each network for a total $50$ epochs, and chose the learned network parameters with minimal validation error during training as our final network weights. We split our dataset into $500$ training and $100$ test samples.  For further implementation details regarding our experiments we refer to our github repository \url{https://github.com/sgoep/data_proximal_networks}.

\subsection{Results}

For the presented results we  write $\signal^\star$ for the ground truth image, $\signal_{\rm FBP}$ for the FBP reconstruction, $\signal_{\rm TV}$ for the TV-regularized solution and add the superscripts  ${\rm RES}$, ${\rm NSN}$,     ${\rm DP}$ for subsequent residual network, null-space network and data-proximal network, respectively.    All reconstructions are compared to the ground truth via the mean squared error (MSE), the peak-signal-to-noise-ratio (PSNR) and the structural similarity index measure (SSIM).

\begin{figure}[htb!]
\centering
\subfloat[$\signal^\star$\label{subfig:limited angle ground truth}]{\includegraphics[width=0.25\textwidth]{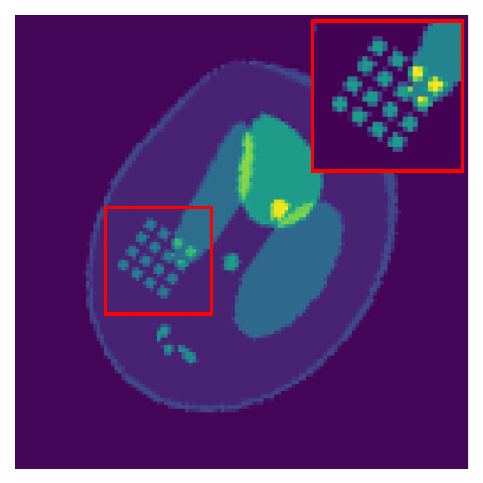}}
\subfloat[$\signal_{\rm FBP}$\label{subfig:FBP}]{\includegraphics[width=0.25\textwidth]{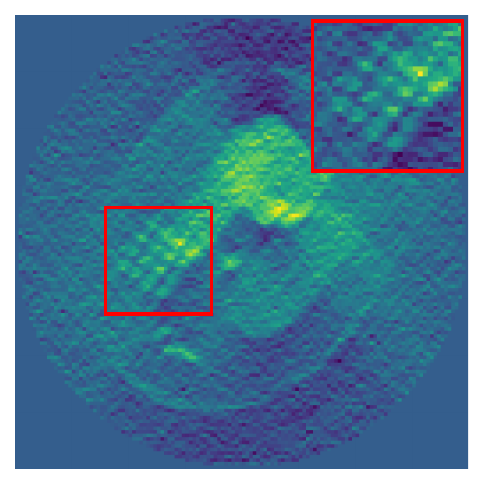}}
\subfloat[$\signal_{\rm TV}$\label{subfig:limited angle tv reconstruction}]{\includegraphics[width=0.25\textwidth]{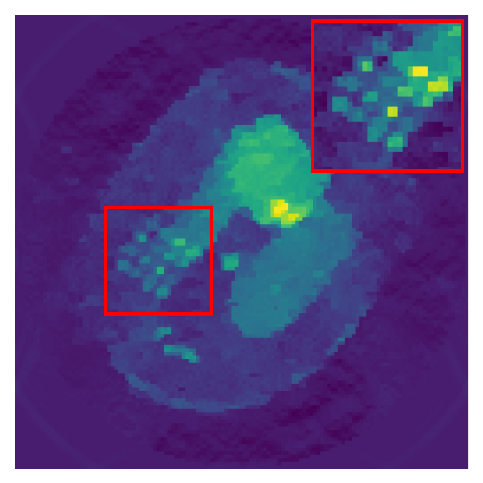}}
\subfloat[$\signal_{\rm FBP}^{\rm RES}$\label{subfig:FBP-res}]{\includegraphics[width=0.25\textwidth]{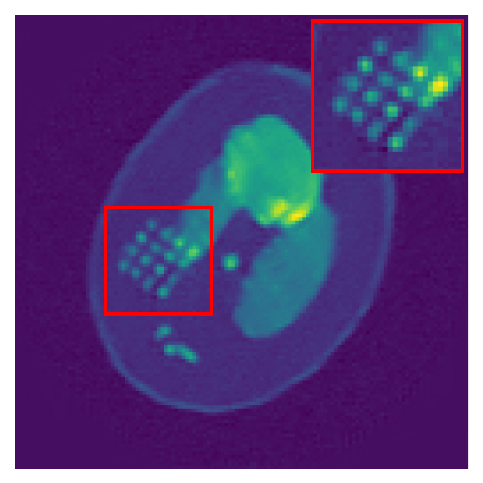}}\\
\subfloat[$\signal_{\rm TV}^{\rm RES}$\label{subfig:TV-res}]{\includegraphics[width=0.25\textwidth]{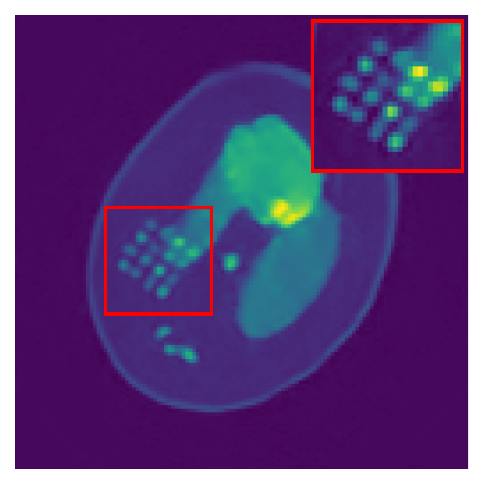}}
\subfloat[$\signal_{\rm FBP}^{\rm NSN}$\label{subfig:FBP nsn}]{\includegraphics[width=0.25\textwidth]{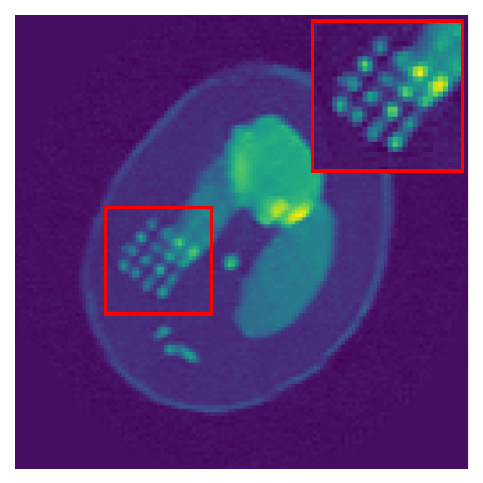}}
\subfloat[$\signal_{\rm TV}^{\rm NSN}$\label{subfig:tv-nsn}]{\includegraphics[width=0.25\textwidth]{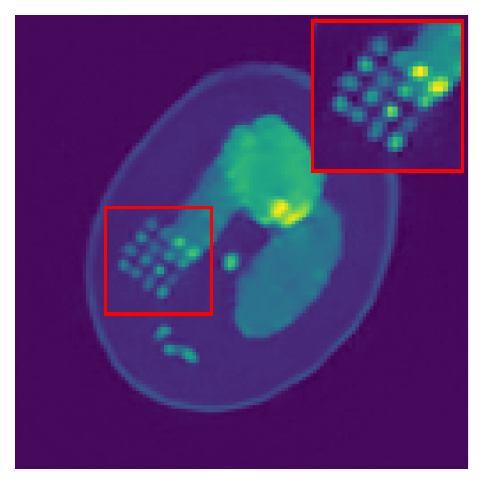}}
\subfloat[$\signal_{\rm TV}^{\rm DP}$\label{subfig:TV-DP}]{\includegraphics[width=0.25\textwidth]{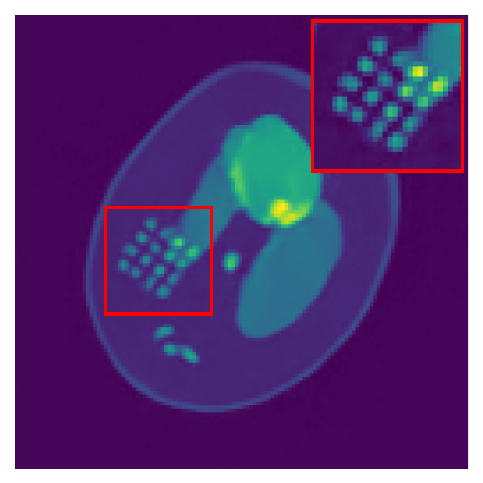}}
\caption{Exemplary reconstructions from limited angle data for the methods described above. All data-driven approaches give visually good results. Looking closely at the figure \ref{subfig:TV-DP}, we see that our proposed method is able to recover the fine details in the magnified region with the highest accuracy.}
\label{fig:recon}
\end{figure}

Reconstruction results  are shown in Figure \ref{fig:recon}. We see that all data-driven modalities overall yield rather good results. Looking closely at the fine grid like features in the magnified section, we can observe that reconstructions shown in Figure \ref{subfig:FBP-res}-\ref{subfig:tv-nsn} tend to differ in their extent of their expression. However, these details appear to be recovered more accurately by our proposed data proximity  approach as shown in Figure \ref{subfig:TV-DP}. Here, all dots are of similar intensity and shape. Furthermore, the intersecting part of the upper and left bigger ellipse like features inside the phantom are recovered more precisely. We attribute these improvements to our data-proximal architecture, which with which the output of the residual network is constraint to the correct energy level and able to converge faster to a suitable solution. A quantitative error comparison is shown in Table~\ref{tab:errors}. We see that our proposed data proximity reconstruction performs best in the chosen metrics. This is in line with our visual inspection above.

\begin{table}[htb!]
    \begin{center}
    \begin{tabular}{  l | c | c | c }
    Method & MSE  & PSNR & SSIM \\
    \hline
   $\signal_{\text{FBP}}$ & 0.0137 & 24.6556 & 0.2867 \\
     $\signal_{\text{TV}}$ & 0.0020 & 33.0772 & 0.6089 \\
	$\signal_{\text{FBP}}^{\rm RES}$ & 0.0013 & 34.9414 & 0.8455 \\
    $\signal_{\text{TV}}^{\rm RES}$ & 0.0010 & 35.7354 & 0.9032 \\
    $\signal_{\text{FBP}}^{\rm NSN}$ & 0.0012 & 35.2030 & 0.8437 \\
    $\signal_{\text{TV}}^{\rm NSN}$ & 0.0009 & 36.6717 & 0.9184 \\
    $\signal_{\text{TV}}^\text{DP}$ & \textbf{0.0008} & \textbf{37.1900} & \textbf{0.9265} \\
    \end{tabular}
    \end{center}
    \caption{Reconstruction errors for CT reconstruction with  limited angular range. The best values in each column are highlighted in bold.}
    \label{tab:errors}
\end{table}

\section{Conclusion}
\label{sec:conclusion}

In this paper, we have introduced a provably convergent data-driven regularization strategy in terms of data-proximal networks. We have demonstrated improved reconstruction properties in our numerical experiments. These experiments were performed on synthetic phantoms and for the parallel beam geometry of the Radon transform. In particular, data were generated and the noise model is explicitly known. Future work could focus on real world applications.  It is possible to combine our approach with appropriate noise estimation techniques and different data proximity functions. More precise adaptation can be achieved by designing more problem-specific data proximity functions of a certain regularity. Analysis under random noise also appears to be an interesting line of research.

\section*{Acknowledgement}
The contribution by S.G. is part of a project that has received funding from the European Union’s Horizon 2020 research and innovation programme under the Marie Sk\l{}odowska-Curie grant agreement No 847476. The views and opinions expressed herein do not necessarily reflect those of the European Commission.


\begin{thebibliography}{10}

\bibitem{adler2017}
J.~Adler and O.~Öktem.
\newblock Solving ill-posed inverse problems using iterative deep neural
  networks.
\newblock {\em Inverse Problems}, 33(12):124007, 2017.

\bibitem{altekrueger2023}
F.~Altekrüger, A.~Denker, P.~Hagemann, J.~Hertrich, P.~Maass, and G.~Steidl.
\newblock {PatchNR}: learning from very few images by patch normalizing flow
  regularization.
\newblock {\em Inverse Problems}, 39(6):064006, 2023.

\bibitem{antholzer2019deep}
S.~Antholzer, M.~Haltmeier, and J.~Schwab.
\newblock Deep learning for photoacoustic tomography from sparse data.
\newblock {\em Inverse Problems in Science and Engineering}, 27(7):987--1005,
  2019.
\newblock PMID: 31057659.

\bibitem{arridge2019}
S.~Arridge, P.~Maass, O.~Öktem, and C.-B. Schönlieb.
\newblock Solving inverse problems using data-driven models.
\newblock {\em Acta Numerica}, 28:1–174, 2019.

\bibitem{benning2018modern}
M.~Benning and M.~Burger.
\newblock Modern regularization methods for inverse problems.
\newblock {\em Acta numerica}, 27:1--111, 2018.

\bibitem{Borg2018}
L.~Borg, J.~S. J{\o}rgensen, J.~Frikel, and E.~T. Quinto.
\newblock Analyzing reconstruction artifacts from arbitrary incomplete x-ray ct
  data.
\newblock {\em SIAM J. Imaging Sciences}, 11(4):2786--2814, 2018.

\bibitem{Bubba_2019}
T.~A. Bubba, G.~Kutyniok, M.~Lassas, M.~März, W.~Samek, S.~Siltanen, and
  V.~Srinivasan.
\newblock Learning the invisible: a hybrid deep learning-shearlet framework for
  limited angle computed tomography.
\newblock {\em Inverse Problems}, 35(6):064002, 2019.

\bibitem{Chambolle2011}
A.~Chambolle and T.~Pock.
\newblock A first-order primal-dual algorithm for convex problems with
  applications to imaging.
\newblock {\em Journal of Mathematical Imaging and Vision}, 40(1):120--145,
  2011.

\bibitem{chen2020deep}
D.~Chen and M.~E. Davies.
\newblock Deep decomposition learning for inverse imaging problems.
\newblock In {\em Computer Vision--ECCV 2020: 16th European Conference,
  Glasgow, UK, August 23--28, 2020, Proceedings, Part XXVIII 16}, pages
  510--526. Springer, 2020.

\bibitem{engl1996regularization}
H.~W. Engl, M.~Hanke, and A.~Neubauer.
\newblock {\em Regularization of inverse problems}, volume 375.
\newblock Springer Science \& Business Media, 1996.

\bibitem{frikel2013characterization}
J.~Frikel and E.~T. Quinto.
\newblock Characterization and reduction of artifacts in limited angle
  tomography.
\newblock {\em Inverse Problems}, 29(12):125007, 2013.

\bibitem{frikelquinto2016}
J.~Frikel and E.~T. Quinto.
\newblock Limited data problems for the generalized radon transform in
  {${\mathbb{R}^n}$}.
\newblock {\em SIAM Journal on Mathematical Analysis}, 48(4):2301--2318, 2016.

\bibitem{genzel2023}
M.~Genzel, J.~Macdonald, and M.~März.
\newblock Solving inverse problems with deep neural networks – robustness
  included?
\newblock {\em IEEE Transactions on Pattern Analysis and Machine Intelligence},
  45(1):1119--1134, 2023.

\bibitem{hammernik2018learning}
K.~Hammernik, T.~Klatzer, E.~Kobler, M.~P. Recht, D.~K. Sodickson, T.~Pock, and
  F.~Knoll.
\newblock Learning a variational network for reconstruction of accelerated
  {MRI} data.
\newblock {\em Magnetic Resonance in Medicine}, 79(6):3055--3071, 2018.

\bibitem{he2016residual}
K.~He, X.~Zhang, S.~Ren, and J.~Sun.
\newblock Deep residual learning for image recognition.
\newblock In {\em 2016 IEEE Conference on Computer Vision and Pattern
  Recognition (CVPR)}, pages 770--778, 2016.

\bibitem{jin2017deep}
K.~H. Jin, M.~T. McCann, E.~Froustey, and M.~Unser.
\newblock Deep convolutional neural network for inverse problems in imaging.
\newblock {\em IEEE Transactions on Image Processing}, 26(9):4509--4522, 2017.

\bibitem{kang2017deep}
E.~Kang, J.~Min, and J.~Ye.
\newblock A deep convolutional neural network using directional wavelets for
  low-dose x-ray ct reconstruction.
\newblock {\em Medical Physics}, 44(10):e360--e375, 2017.

\bibitem{kofler2018}
A.~Kofler, M.~Haltmeier, C.~Kolbitsch, M.~Kachelrie{\ss}, and M.~Dewey.
\newblock A u-nets cascade for sparse view computed tomography.
\newblock In F.~Knoll, A.~Maier, and D.~Rueckert, editors, {\em Machine
  Learning for Medical Image Reconstruction}, pages 91--99, Cham, 2018.
  Springer International Publishing.

\bibitem{lee2017deep}
D.~Lee, J.~Yoo, and J.~C. Ye.
\newblock Deep residual learning for compressed sensing mri.
\newblock In {\em 2017 IEEE 14th International Symposium on Biomedical Imaging
  (ISBI 2017)}, pages 15--18, 2017.

\bibitem{li2020NETT}
H.~Li, J.~Schwab, S.~Antholzer, and M.~Haltmeier.
\newblock {NETT:} solving inverse problems with deep neural networks.
\newblock {\em Inverse Problems}, 36(6):065005, 2020.

\bibitem{majumdar2015realtime}
A.~Majumdar.
\newblock Real-time dynamic mri reconstruction using stacked denoising
  autoencoder, 2015.

\bibitem{mardani2018deep}
M.~Mardani, E.~Gong, J.~Y. Cheng, S.~S. Vasanawala, G.~Zaharchuk, L.~Xing, and
  J.~M. Pauly.
\newblock Deep generative adversarial neural networks for compressive sensing
  mri.
\newblock {\em IEEE transactions on medical imaging}, 38(1):167--179, 2018.

\bibitem{natterer2001mathematics}
F.~Natterer.
\newblock {\em The mathematics of computerized tomography}.
\newblock SIAM, 2001.

\bibitem{ongie2020deep}
G.~Ongie, A.~Jalal, C.~A. Metzler, R.~G. Baraniuk, A.~G. Dimakis, and
  R.~Willett.
\newblock Deep learning techniques for inverse problems in imaging.
\newblock {\em IEEE Journal on Selected Areas in Information Theory},
  1(1):39--56, 2020.

\bibitem{Persson2001}
M.~Persson, D.~Bone, and H.~Elmqvist.
\newblock Total variation norm for three-dimensional iterative reconstruction
  in limited view angle tomography.
\newblock {\em Physics in Medicine \& Biology}, 46(3):853, 2001.

\bibitem{Quinto93}
E.~T. Quinto.
\newblock Singularities of the {X}-ray transform and limited data tomography in
  {$\mathbb{R}^2$} and {$\mathbb{R}^3$}.
\newblock {\em SIAM J. Math. Anal.}, 24(5):1215--1225, 1993.

\bibitem{quinto2017artifacts}
E.~T. Quinto.
\newblock Artifacts and visible singularities in limited data x-ray tomography.
\newblock {\em Sens. Imaging}, 18(1):1--14, 2017.

\bibitem{rivenson2017deep}
Y.~Rivenson, Z.~G\"{o}r\"{o}cs, H.~G\"{u}naydin, Y.~Zhang, H.~Wang, and
  A.~Ozcan.
\newblock Deep learning microscopy.
\newblock {\em Optica}, 4(11):1437--1443, 2017.

\bibitem{ronneberger2015}
O.~Ronneberger, P.~Fischer, and T.~Brox.
\newblock U-net: Convolutional networks for biomedical image segmentation.
\newblock In N.~Navab, J.~Hornegger, W.~M. Wells, and A.~F. Frangi, editors,
  {\em Medical Image Computing and Computer-Assisted Intervention -- MICCAI
  2015}, pages 234--241, Cham, 2015. Springer International Publishing.

\bibitem{scherzer2009variational}
O.~Scherzer, M.~Grasmair, H.~Grossauer, M.~Haltmeier, and F.~Lenzen.
\newblock {\em Variational methods in imaging}.
\newblock Springer, New York, 2009.

\bibitem{schlemper2018deep}
J.~Schlemper, J.~Caballero, J.~V. Hajnal, A.~N. Price, and D.~Rueckert.
\newblock A deep cascade of convolutional neural networks for dynamic {MR}
  image reconstruction.
\newblock {\em IEEE Transactions on Medical Imaging}, 37(2):491--503, 2018.

\bibitem{schwab2019deepnullspacelearning}
J.~Schwab, S.~Antholzer, and M.~Haltmeier.
\newblock Deep null space learning for inverse problems: convergence analysis
  and rates.
\newblock {\em Inverse Problems}, 35(2):025008, 2019.

\bibitem{Schwab2020}
J.~Schwab, S.~Antholzer, and M.~Haltmeier.
\newblock Big in {Japan}: Regularizing networks for solving inverse problems.
\newblock {\em Journal of Mathematical Imaging and Vision}, 62(3):445--455,
  2020.

\bibitem{sidky2008}
E.~Y. Sidky and X.~Pan.
\newblock Image reconstruction in circular cone-beam computed tomography by
  constrained, total-variation minimization.
\newblock {\em Physics in Medicine and Biology}, 53(17):4777--4807, 2008.

\bibitem{sonderby2016amortised}
C.~Sønderby, J.~Caballero, L.~Theis, W.~Shi, and F.~Huszár.
\newblock Amortised map inference for image super-resolution.
\newblock In {\em International Conference on Learning Representations}, 2017.

\bibitem{velikina2007}
J.~Velikina, S.~Leng, and G.-H. Chen.
\newblock Limited view angle tomographic image reconstruction via total
  variation minimization.
\newblock {\em Proc SPIE}, 6510:709--720, 2007.

\bibitem{Wang2020}
G.~Wang, J.~C. Ye, and B.~De~Man.
\newblock Deep learning for tomographic image reconstruction.
\newblock {\em Nature Machine Intelligence}, 2(12):737--748, 2020.

\bibitem{wang2017}
T.~Wang, K.~Nakamoto, H.~Zhang, and H.~Liu.
\newblock Reweighted anisotropic total variation minimization for limited-angle
  ct reconstruction.
\newblock {\em IEEE Transactions on Nuclear Science}, PP:2742--2760, 2017.

\bibitem{yiasemis2022recurrent}
G.~Yiasemis, J.-J. Sonke, C.~Sánchez, and J.~Teuwen.
\newblock Recurrent variational network: A deep learning inverse problem solver
  applied to the task of accelerated mri reconstruction, 2022.

\end{thebibliography}
\end{document}